%% file: root.tex
\documentclass[letterpaper, 10 pt, conference]{ieeeconf}  

\usepackage{amsmath}
\usepackage{amssymb}
\usepackage{amsfonts}
\usepackage{cite} 
\usepackage{graphicx}
\usepackage{siunitx}
\usepackage{subcaption}
\usepackage{xcolor}

\newtheorem{theorem}{Theorem}

\newtheorem{assumption}{Assumption}

\DeclareMathOperator*{\argmin}{arg\,min}

\IEEEoverridecommandlockouts                              

\title{\LARGE \bf
Control Allocation with Adaptive Augmentation for Aerodynamic Optimization of Trailing Edge Morphing Aircraft
}

\author{Mark Spiller,  Lennart Kracke, and Johannes Autenrieb
\thanks{The authors are with the
	German Aerospace Center (DLR), Institute of Flight Systems, 38108, Braunschweig, Germany. {\tt\small mark.spiller@dlr.de, lennart.kracke@dlr.de, johannes.autenrieb@dlr.de}}%
\thanks{This work has been submitted to the IEEE for possible publication.
Copyright may be transferred without notice, after which this version may no longer be accessible.}
}

\begin{document}

\maketitle
\thispagestyle{empty}
\pagestyle{empty}

\begin{abstract}

This paper presents a control allocation framework with adaptive augmentation for a trailing edge morphing aircraft, providing stability guarantees under uncertainty while exploiting the available morphing degrees of freedom to optimize aerodynamic efficiency. A baseline controller is designed using the nominal aircraft model to establish the desired closed-loop reference dynamics. For the aerodynamics, the wing shape is optimized dependent on the flight state to achieve a target elliptical lift distribution corresponding to minimum induced drag. The adaptive augmentation is tailored to the control allocation problem to account for the uncertain system dynamics and stabilize the aircraft around the reference dynamics. The resulting stabilization condition is formulated as hard constraint in the allocation problem while minimizing deviations from the corresponding state-dependent aerodynamically optimal wing shape. Numerical results demonstrate that the adaptive augmentation compensates for matched uncertainties while the control allocation optimizes the wing shape with respect to the reference elliptical lift distribution.

\end{abstract}

\section{Introduction}
\label{sec:introduction}
\input{sections/introduction.tex}

\section{Aircraft Dynamics}
\label{sec:aircarft_modeling}
\input{sections/aircarft_modeling.tex}

\section{Baseline Controller Design}
\label{sec:baseline_controller}
\input{sections/baseline_controller.tex}

\section{Aerodynamic Optimization}
\label{sec:aero_optimization}
\input{sections/aero_optimization.tex}

\section{Adaptive Augmentation and Control Allocation}
\label{sec:control_allcation}
\input{sections/control_allocation.tex}


\section{Numerical Example}
\label{sec:num_example}
\input{sections/num_example.tex}

\section{Conclusions}

This paper presents a control allocation approach with adaptive augmentation for a trailing edge morphing aircraft that accounts for uncertainty in stabilization while optimizing aerodynamic efficiency. The tracking error with respect to the desired reference behavior is guaranteed to remain bounded. Numerical results demonstrate stable tracking while exploiting the available morphing degrees of freedom to approach the target elliptical lift distribution where compatible with the required aircraft dynamics.







\bibliographystyle{IEEEtran}
\bibliography{mybibfile}

\end{document}

%% file: sections/introduction.tex
Morphing-wing aircraft offer enhanced aerodynamic performance through their ability to continuously adapt wing geometry during flight. This capability enables real-time redistribution of lift, which can be exploited to improve both aerodynamic efficiency and maneuverability. In contrast to conventional fixed-wing designs, morphing configurations allow performance optimization across a wide range of mission profiles, particularly in scenarios involving multi-regime flight, wide flight-envelope operation, and dynamically changing performance objectives, such as long-endurance surveillance combined with high-speed transit or maneuver-intensive missions.

From a control perspective, however, morphing-wing aircraft introduce significant challenges, primarily due to the inherently fast time-varying dynamics and strong couplings induced by the morphing process \cite{lingling2022design}. Control strategies of morphing wing aircraft can be classified into approaches that target the control of the aircraft dynamics, the wing shape control or both \cite{parancheerivilakkathil2024review}. This paper focuses on aircraft dynamics control under morphing and integrated approaches in which the morphing actuators are used simultaneously for flight dynamics control and wing shape adaptation.

Aircraft dynamics control: The majority of linear approaches model the morphing aircraft either as a linear switched or LPV system, and subsequently synthesize controllers based on $\mathcal{H}_\infty$ performance criteria  \cite{lee2019linear,guo2017dynamic,wang2015finite,jiang2015systematic,cheng2017non,yao2019intermittent,baldelli2008modeling}. Nonlinear control approaches include methods such as nonlinear dynamic inversion (NDI) \cite{li2016modeling}, incremental nonlinear dynamic inversion (INDI) \cite{wang2021seamless,pollak2026incremental}, backstepping \cite{tong2013multi,gong2019disturbance,wang2019morphing}, sliding mode control \cite{yan2019adaptive,dai2021longitudinal,lee2018sliding,yue2017flight}, and reinforcement learning (RL) \cite{cao2025integrated,cao2025intelligent}. The following works explicitly investigate adaptive control: In \cite{chandrasekaran2018adaptive}, an adaptive flight control system for a flapping wing aircraft is developed. Dynamic inversion in combination with PID control and adaptive augmentation is employed for longitudinal control. However, the uncertainties addressed to be compensated by the adaptive controller are not explicitly quantified, and no stability analysis is provided. Similarly, the work of \cite{cheng20231} applies dynamic inversion in combination with LQR and $\mathcal{L}_1$-based adaptive augmentation for a variable sweep morphing aircraft. The matched uncertainty assumption is adopted without further justification, and the dynamic inversion approach relies on exact knowledge of the control effectiveness matrix.

Combined flight dynamics and wing shape control: For many morphing concepts, the actuation channels for flight dynamics control and wing shape adaptation are distinct. This is the case, for example, in variable-sweep morphing \cite{lee2019linear,yao2019intermittent,wang2015finite,jiang2015systematic}, wing flapping \cite{chandrasekaran2018adaptive}, wing folding \cite{guo2017dynamic}, and telescopic wing morphing \cite{yue2017flight}. In contrast, camber morphing does not exhibit such a separation, as the same control inputs can be used for flight dynamics control as well as for wing shaping. To the best of the authors’ knowledge, \cite{wang2021seamless} is the only work that explicitly addresses this aspect. In \cite{wang2021seamless}, INDI combined with a linear error controller is applied to all servo inputs of a morphing trailing edge wing to control aerodynamic forces and structural bending moments for gust and maneuver load alleviation. The resulting wing shape is thereby determined implicitly through the control objective. The approach relies on INDI to inherently compensate for uncertainties in the control effectiveness matrix when applying the control allocation.

In this paper, a linear reduced-order representation of the lateral dynamics of a trailing edge morphing aircraft is derived from its full nonlinear dynamics. The resulting linear model incorporates parametric deviations from the nominal model and unmodeled nonlinear effects as uncertainty perturbations. The choice of states in the reduced-order model ensures that all uncertainties are matched. For the resulting over-actuated lateral dynamics, the control objective is to achieve control allocation under uncertainty while preserving stability and optimizing the wing shape for aerodynamic efficiency. Therefore, a baseline controller is designed using the nominal system knowledge to define the desired closed-loop reference dynamics. The wing shape is optimized with respect to a given reference elliptical lift distribution using the nominal aerodynamic model. We formulate a hard constraint in the control allocation based on an adaptive augmentation that preserves stability with respect to the reference model, while the allocation objective minimizes the weighted distance from the aerodynamically optimal wing shape.

Throughout the paper, deviations from trimmed conditions in states and inputs are denoted by $\delta (\cdot)=(\cdot)-(\cdot)_\mathrm{trim}$, perturbations from the nominal model are denoted by $\Delta (\cdot)=(\cdot)-(\cdot)_\mathrm{nom}$, and estimated quantities are represented by $\hat {( \cdot)}$. Positive definite matrices are expressed as $A\succ0$.

%% file: sections/aircarft_modeling.tex
The dynamics of the morphing wing aircraft are expressed in the body-fixed frame according to
\begin{subequations}
	\label{num:rigid_body_dynamics}
	\begin{align}
		\dot V_b
		&
		=
		-
		\omega \times V_b
		+
		\frac{F_b}{m},\\
		\dot \Theta
		&=
		\Psi\omega,\\
		\dot \omega &= 
		-J^{-1}(\omega \times J\omega)
		+
		J^{-1}M_b,
	\end{align}
\end{subequations}
with the states being: the body-fixed velocities
\( 
V_b=
\begin{bmatrix}
	u & v & w
\end{bmatrix}^{\top}
\in \mathbb{R}^3
\), 
the rates \( 
\omega=
\begin{bmatrix}
	p & q & r
\end{bmatrix}^{\top}
\in \mathbb{R}^3
\) 
and the Euler angles
\( 
\Theta=
\begin{bmatrix}
	\phi & \theta & \psi
\end{bmatrix}^{\top}
\in \mathbb{R}^3
\). Further, the inertia tensor is given by $J\in\mathbb{R}^{3 \times 3}$, the total mass is $m\in \mathbb{R}_{>0}$, and the Euler angle rate transformation matrix is denoted by $\Psi\triangleq\Psi(\phi,\theta)\in\mathbb{R}^{3 \times 3}$. 
The total forces $F_b\in\mathbb{R}^{3}$ and moments $M_b\in\mathbb{R}^{3}$ are defined by the functions
\begin{subequations}
	\label{num:total_forces_and_moments}
\begin{align}
	F_b
	&=
	f_{F_b}
	\left(
	\alpha, \beta, \phi, \theta, 
	\bar p, 
	\bar q, 
	\bar r, 
	Q, 
	T, \xi,\eta,\zeta
	\right),
	\\
	M_b
	&=
	f_{M_b}
	(
	\alpha, \beta, 
	\bar p, 
	\bar q, 
	\bar r,
	Q,
	\xi,\eta,\zeta
	),
\end{align}
\end{subequations}
where $\alpha\in \mathbb{R}$ is the angle of attack, $\beta\in \mathbb{R}$ is the angle of sideslip,  $\xi=\{\xi_i\}^{N}_{i=1}$ are $N$ servo inputs indexed by $\xi_i\in \mathbb{R}$ for trailing edge morphing, $\eta\in \mathbb{R}$ is the elevator deflection, $\zeta\in \mathbb{R}$ is the rudder deflection, and 
\(
Q = 
0.5 \rho_{\mathrm{air}}(h_{\mathrm{alt}}) v_{\mathrm{TAS}}^2 \notag
\)
is the dynamic pressure with airspeed $v_{\mathrm{TAS}}\in \mathbb{R}_{>0}$, air density $\rho_{\mathrm{air}}(h_{\mathrm{alt}})\in \mathbb{R}_{>0}$, and altitude $h_{\mathrm{alt}}\in\mathbb{R}_{>0}$. The normalized rates are defined as 
\(
	\bar p = \frac{B_{\mathrm{ref}}}{2 v_{\mathrm{TAS}}}
	p
\),
\(
	\bar q = 
	\frac{C_{\mathrm{ref}}}{2 v_{\mathrm{TAS}}}q
\),
\(
	\bar r = 
	\frac{B_{\mathrm{ref}}}{2 v_{\mathrm{TAS}}}r,
\)
where $C_{\mathrm{ref}}, B_{\mathrm{ref}}\in\mathbb{R}_{>0}$ are the reference chord length and span.
The thrust $T\in \mathbb{R}_{>0}$ is modeled by a look-up table of the jet engine defined by the function 
\begin{align}
	T=f_T(\tau) 
\label{num:thrust_model}
\end{align}
with throttle $\tau\in [0,1]$. Finally, the inputs of the morphing aircraft are the deflections $\xi$, $\eta$, $\zeta$ and the throttle $\tau$.

Linearizing the nonlinear dynamics (\ref{num:rigid_body_dynamics}) yields the standard state space representation
\begin{align}
	\begin{bmatrix}
		\dot r \\
		\dot \beta \\
		\dot p \\
		\dot \phi \\
	\end{bmatrix}
	=&
	\begin{bmatrix}
		N_r & N_\beta & N_p & 0 \\
		-1 & Y_\beta & 0 & \frac{g}{V_0} \\
		L_r & L_\beta & L_p & 0 \\
		\Theta_0 & 0 & 1 & 0 \\
	\end{bmatrix}
	\begin{bmatrix}
		\delta r \\
		\delta \beta \\
		\delta p \\
		\delta \phi \\
	\end{bmatrix}\notag\\
	& \qquad 
	+
	\begin{bmatrix}
		N_{\xi_1} & \hdots & N_{\xi_N} & N_\zeta \\
		0& \hdots & 0 & Y_\zeta \\
		L_{\xi_1} & \hdots & L_{\xi_N} & L_\zeta \\
		0& \hdots & 0 & 0 
	\end{bmatrix}
	\begin{bmatrix}
		\delta  \xi_1 \\
		\vdots\\
		\delta  \xi_N\\
		\delta  \zeta
	\end{bmatrix}
	\label{num:linear_lateral_motion} 
\end{align}
of lateral motion obtained at the scheduling grid points
\begin{align}
	\rho_j
	=&
	\begin{bmatrix}
		\alpha_{\mathrm{trim},j}, \beta_{\mathrm{trim},j},
		Q_{\mathrm{trim},j}
	\end{bmatrix}^{\top}.
\label{num:grid_point}
\end{align}

Due to space limitations, we restrict ourselves to the design of the lateral controller which is the one relevant for the morphing and control allocation. 

The gird points represent $j=1\dots N_{\mathrm{op}}$ trimmed operational conditions around which the nonlinear dynamics are linearized.  Using (\ref{num:grid_point}) and the scheduling vector
$
	\rho
	=
	\begin{bmatrix}
		\alpha, 
		\beta,
		Q
	\end{bmatrix}^{\top},
$ one can formulate a grid-based interpolated LPV representation of the lateral dynamics. 

After some algebra, the linearization (\ref{num:linear_lateral_motion}) can be reformulated as the reduced-order nominal lateral dynamics
\begin{align}
	\dot x
	&=
	A(\rho_j)
	\delta x
	+
	B(\rho_j)
	\delta u
	+
	E(\rho_j)
	\delta w
	\label{num:red_lat_lon_dynamics}
\end{align}
with
\begin{align}
	A
	&=
	\begin{bmatrix}
		N_r  
		-N_p\Theta_0 
		&  
		N_p 
		& 
		0  \\
		L_r  
		+
		(N_r   
		-
		L_p
		-
		N_p\Theta_0)\Theta_0
		&
		L_p 
		+
		\Theta_0 N_p
		& 
		0\\
		0  & 1 & 0 
	\end{bmatrix},
	\notag\\
	B
	&
	=
	\begin{bmatrix}
		N_{\xi_1} & \dots &N_{\xi_N} & N_\zeta  \\
		L_{\xi_1}
		+
		\Theta_0 N_{\xi_1} 
		& 
		\dots
		&
		L_{\xi_N}
		+
		\Theta_0 N_{\xi_N} 
		&  
		L_\zeta
		+
		\Theta_0 N_\zeta  \\
		0 & \dots & 0 & 0
	\end{bmatrix},
	\notag
	\\
	E
	&
	=
	\begin{bmatrix}
		N_\beta   \\
		L_\beta   
		+ 
		\Theta_0 N_\beta    \\
		0
	\end{bmatrix}
	\notag
\end{align}
and
\(
	\delta x
	=
	\begin{bmatrix}
		\delta r &
		\dot \phi &
		\delta \phi
	\end{bmatrix}^{\top}
\),
\(
	\delta u
	=
	\begin{bmatrix}
		\delta \xi_1 &
		\dots &
		\delta \xi_N &
		\delta \zeta 
	\end{bmatrix}^{\top}
\),
\(
	\delta w
	=
	\delta \beta
\).
The exogenous input $\delta w$ is measureable and $E$ denotes its input shaping matrix.

We decompose the input matrices as
\begin{alignat}{2}
	B
	&
	=
	\begin{bmatrix}
		\bar B\\
		0
	\end{bmatrix},
	&\quad
	E
	&=
	\begin{bmatrix}
		\bar E \\
		0
	\end{bmatrix}
\label{num:decomposition_input_matrices}
\end{alignat}
and make the following assumption, which has to be satisfied by nominal aircraft design to ensure sufficient control authority. 
\begin{assumption}
\label{assumpt_full_rank_input_matrix}
	The matrix $\bar{B}$ has full row rank.
\end{assumption}

The representation (\ref{num:red_lat_lon_dynamics}) describes assumed nominal dynamics that are in practice affected by parametric uncertainties as well as nonlinearities. Therefore, let $\dot x = f(x,u,w,p)$ denote the true lateral nonlinear reduced-order dynamics, where $p$ describes the true system parameters. Define 
$
z
=
\begin{bmatrix}
x^{\top} & u^{\top} & w^{\top}	
\end{bmatrix}^{\top}$
and
$
z_{0}
=
\begin{bmatrix}
	x^{\top}_{0} & u^{\top}_{0} & w^{\top}_{0}	
\end{bmatrix}^{\top}
$. Then the true dynamics can be expressed as
\begin{align}
	\dot x
	&=
	f(z_{0},p)
	+
	F
	(z-z_{0})
	+
	\mathcal{O}
	(\|
	z-z_{0}
	\|^2),	\label{num:true_nonlinear_lon_lat}
\end{align}
where the Jacobian $F=\frac{\partial f}{\partial z}|_{(z_{0},p)}$ is decomposed into known nominal parts and unknown perturbations, i.e.,
\begin{align}
F
	&=
	\begin{bmatrix}
		 A(\rho_j) + \Delta A &   
		 B(\rho_j) + \Delta B &   
		 E(\rho_j) + \Delta E
	\end{bmatrix}.
	\notag
\end{align}
By defining
\begin{align}
	&\Delta d
	=
	f(z_{0},p)
	+
	\mathcal{O}
	(\|
	z-z_{0}
	\|^2)
	+
	F
	(z_{\mathrm{trim}}-z_{0}),
	\notag
\end{align}
the true nonlinear dynamics (\ref{num:true_nonlinear_lon_lat}) can be rewritten as
\begin{align}
	\dot x
	&=
	(A(\rho_j)
	+
	\Delta A)
	\delta x
	+
	(B(\rho_j)
	+
	\Delta B)
	\delta u\notag\\
	& \qquad \quad
	+
	(E(\rho_j)
	+
	\Delta E)
	\delta w
	+
	\Delta d,
	\label{num:red_lat_lon_dynamics_pertubed}
\end{align}
where the perturbations $\Delta(\cdot)$ scale with the level of model uncertainty and the distance from the operational point $\rho_j$.

Consider the definition of the state variables $	\delta x
=
\begin{bmatrix}
	\delta r &
	\dot \phi &
	\delta \phi
\end{bmatrix}^{\top}$. Due to the choice of state variables, the dynamics of the last state are given by a pure kinematic integrator,
$
 \delta \dot{\phi} = \dot{\phi}
$. These relation is purely kinematic and therefore independent of parametric uncertainties. Consequently, all admissible perturbations preserve the zero structure in the last row and have the following form
\begin{subequations}
	\label{num:structure_of_uncertainties}
\begin{alignat}{2}
	\Delta A
	&=
	\begin{bmatrix}
		\Delta \bar A\\
		0
	\end{bmatrix},
	\qquad
	&
	\Delta B
	&=
	\begin{bmatrix}
		\Delta \bar B\\
		0
	\end{bmatrix},
	\label{num:structure_of_uncertainties_a}\\
	\Delta E
	&=
	\begin{bmatrix}
		\Delta \bar E\\
		0
	\end{bmatrix},
	\qquad
	&
	\Delta d_{i}
	&=
	\begin{bmatrix}
		\Delta \bar d_{i}\\
		0
	\end{bmatrix}.
\label{num:structure_of_uncertainties_b}
\end{alignat}
\end{subequations}

The goal of this work is to control the over-actuated uncertain system (\ref{num:red_lat_lon_dynamics_pertubed}) through control allocation while optimizing aerodynamic efficiency.

For the remainder of the paper, we mostly omit the explicit dependence of linearizations and controllers on the scheduling grid points $\rho_j$ for notational simplicity.

%% file: sections/baseline_controller.tex
In the following, we restrict ourselves to the design of local controllers at the grid points $\rho_j$. The synthesis of a global LPV controller, for instance via gain scheduling or polytopic approaches, is beyond the scope of this work.

We consider yaw rate and roll angle as control variables
\begin{align}
	\delta y
	&=
	\begin{bmatrix}
		\delta r \\
		\delta	\phi 
	\end{bmatrix}
	=
	C
	\delta x,
	\qquad
	C
	=
		\begin{bmatrix}
			1  & 0 & 0   \\
			0  & 0 & 1 
	\end{bmatrix},
	\label{num:attitude_control_varibles}
\end{align}
where the corresponding reference values are denoted as $\delta y_{r}$.

The goal is to control the system (\ref{num:red_lat_lon_dynamics_pertubed}) under uncertainty. However, only the nominal description  (\ref{num:red_lat_lon_dynamics}) is available for model-based controller synthesis. Therefore, we will first design the baseline controller based on (\ref{num:red_lat_lon_dynamics}) and subsequently formulate an adaptive augmentation to account for the uncertainties in (\ref{num:red_lat_lon_dynamics_pertubed}).

To achieve the desired control task, the linear baseline controller
\begin{align}
	\delta u_{\mathrm{base}}
	=
	K_{x} \delta  x
	+
	K_{r} \delta  y_{r}
	+
	K_{w} \delta w 
\label{num:lin_base_ctr_structure}
\end{align}
is formulated. The state feedback matrix $K_{x}$ can be designed by any linear control approach like LQR, pole placement, or some robust control method in order to achieve desired stable closed-loop dynamics of (\ref{num:red_lat_lon_dynamics}). The prefilter matrix $K_{r}$ for reference tracking is then designed as
\begin{align}
K_{r} 
=
\Gamma^{\top}(\Gamma\Gamma^{\top})^{-1}
\notag
\end{align}
with
\(
\Gamma
=
-C(A+BK_{x})^{-1}B
\)
so that $K_{r}$ always satisfies the equation for stationary accurate tracking
\begin{align}
	\delta  y
	&
	=
	-
	C
	(A
	+
	BK_{x})^{-1} 
	B
	K_{r} \delta  y_{r}
	=
	\delta  y_{r}.
	\notag
\end{align}
The compensator of the exogenous input is obtained by solving
\begin{align}
	-E
	=
	\begin{bmatrix}
		-
		\bar E\\
		0
	\end{bmatrix}
	&
	=
	\begin{bmatrix}
		\bar B K_{w}\\
		0
	\end{bmatrix}
	=
	BK_{w},
	\notag
\end{align}
for $K_{w}$, which is guaranteed to always have a solution due to Assumption \ref{assumpt_full_rank_input_matrix}. Finally, substituting the baseline controller $\delta u=\delta u_{\mathrm{base}}$ in (\ref{num:red_lat_lon_dynamics}), yields the desired closed-loop reference model
\begin{align}
	\dot x_{m}
	&=
	A_{m}
	\delta  x_{m}
	+
	B_{m}
	\delta  y_{r}.
	\label{num:desired_closed_loop_dyanmics}
\end{align}
with $A_{m}=A
+
B
K_{x}$ being Hurwitz and $B_{m}=BK_{r}$.

%% file: sections/aero_optimization.tex
In the lateral dynamics, the morphing wing introduces additional degrees of freedom that enable the controller to generate the required control moments while simultaneously optimizing the wing shape for secondary objectives beyond the primary stabilization task. In this contribution, the morphing wing shape is optimized alongside the control objective. The aim is to enhance flight efficiency in steady wings-level flight by minimizing the deviation from a reference elliptical lift distribution which serves as a proxy for minimizing induced drag.

Throughout this section, the aerodynamic optimization is performed using the nominal aerodynamic model and nominal system parameters. While this model does not fully capture the true aerodynamic behavior under uncertainty, aerodynamic optimization typically relies on the nominal model, as the uncertain aerodynamic characteristics are generally unknown a priori.

We evaluate the nominal aerodynamics model 
\begin{align}
	L_{\mathrm{req}}
	&=
	f_{L}
	\left(
	\alpha, \beta, 
	\bar p, \bar q, \bar r, 
	Q, 
	T,\xi,\eta,\zeta
	\right),
	\notag
\end{align}
to calculate the required lift force $L_{\mathrm{req}}$  that is demanded by the controller. We then define the reference segment lift force 
\begin{align}
	L_{\mathrm{ref},i}
	&=
	L_{\mathrm{req}}
	\frac{f_{\mathrm{elliptic},i}}{\sum_j f_{\mathrm{elliptic},j}}
\label{num:reference_segment_lift_force}
\end{align}
of segment $i$ based on the elliptic distribution 
\begin{align}
	f_{\mathrm{elliptic},i}
	&=
	\sqrt{1-\left(\frac{2y_i}{B_\mathrm{ref}}\right)^2},
	\notag
\end{align}
where 
\begin{align}
	y_i
	&=
	-\frac{B_\mathrm{ref}}{2}
	+
	(i-1)
	\frac{B_\mathrm{ref}}{N-1},
	\qquad
	i=1,\dots,N
	\notag
\end{align}
is the absolute spanwise coordinate of servo $i$ measured from the wing root and $B_\mathrm{ref}$ is the reference wingspan. The definition (\ref{num:reference_segment_lift_force}) guarantees that the reference segment lift force satisfies
$
\sum_i L_{\mathrm{ref},i}=L_{\mathrm{req}}$, thereby preserving the total lift demand while enforcing an elliptic spanwise distribution. 

In the following, the segment lift forces resulting from the applied control inputs are computed based on the nominal aerodynamic model. According to \cite{anderson1980numerical}, we calculate the nominal segment lift force as
\begin{align}
L_{\mathrm{nom},i}(\xi_i)
&=
\frac{1}{2}
\rho v_\mathrm{TAS}c(y_i)C_{L,i}(\alpha,\xi_i),
\notag
\end{align}
where $\xi_i$ is the deflection of servo $i$, $c(y_i)$ is the chord length at the spanwise coordinate $y_i$, $C_{L,i}$ is the sectional lift coefficient of segment $i$, and $\rho$ denotes the air density. 

The optimal servo deflections are obtained by solving the optimization problem
\begin{align}
	\{\xi_{\mathrm{aero},i}\}^N_{i=1}
	&=
	\argmin_{\{\xi_i\}^N_{i=1}}
	\sum_{i=1}^N
	\left(
	L_{\mathrm{ref},i}-L_{\mathrm{nom},i}(\xi_i)
	\right)^2,
	\notag
\end{align}
where $L_{\mathrm{ref},i}$ denotes the target elliptic lift distribution corresponding to the aerodynamically optimal condition of minimum induced drag. For the optimal rudder deflection we define $\zeta_\mathrm{aero}=0$ leading to the definition 
\begin{align}
	\delta u_{\mathrm{aero}}
	&=
	\begin{bmatrix}
		\delta \xi_{\mathrm{aero},1} &
		\dots &
		\delta \xi_{\mathrm{aero},N} &
		\delta \zeta_\mathrm{aero}
	\end{bmatrix}^{\top}
	\label{num:def_aero_opt_inputs}
\end{align}
of the optimal inputs with respect to the aerodynamics. 

%% file: sections/control_allocation.tex
To ensure lateral stabilization of the aircraft while simultaneously optimizing the wing shape, a control allocation problem is formulated. However, both stabilization and control allocation are affected by the uncertainties in the system description (\ref{num:red_lat_lon_dynamics_pertubed}). To retain stability guarantees under these uncertainties, the nominal controller is augmented by an adaptive component specifically tailored to the uncertain control allocation problem. This augmentation is incorporated as a hard constraint in the control allocation, ensuring stability under uncertain dynamics while allowing the wing shape to be optimized.

Specifically, we propose the following solution of the control allocation problem under uncertainty. We decompose the input matrix as
\begin{align}
	B
	+
	\Delta B
	&=
	\begin{bmatrix}
		\bar B
		+
		\Delta \bar B\\
		0
	\end{bmatrix}
	=
	\begin{bmatrix}
		\bar B\Lambda\\
		0
	\end{bmatrix}
	=
	B\Lambda
	\label{num:decompose_input_lambda}
\end{align}
where a particular representation of $\Lambda$ is given by
\begin{align}
	\Lambda
	&=
	I
	+
	\bar B^{\top}(\bar B\bar B^{\top})^{-1}
	\Delta \bar B
	\label{num:input_parameterization_lambda}
\end{align}
using Assumption \ref{assumpt_full_rank_input_matrix}. Further, We make the following additional assumption.
\begin{assumption}
\label{assumpt_input_paramet_lambda}
	Let $\hat \Lambda$ be an estimation of $\Lambda$. We assume that the square matrix $\Lambda$ and its estimation $\hat \Lambda$ are always non-singular.
\end{assumption}
The assumption of the non-singularity of $\Lambda$ is naturally satisfied as otherwise the aircraft would suffer from a loss of control authority. The non-singularity of $\hat \Lambda$ has to be guaranteed by the estimation algorithm. 

Defining 
\(
\tilde \Lambda
=
\hat \Lambda
- 
\Lambda
\)
yields
\begin{align}
	(B
	+
	\Delta B)
	=
	B\Lambda
	=
	B
	\hat \Lambda
	-
	B
	\tilde \Lambda.
	\label{num:input_matrix_indentity}
\end{align}
We allocate inputs $\delta u$ in such a way that 
\begin{align}
	B
	(\delta u_{\mathrm{base}}
	+
	\delta u_{\mathrm{ad}})
	=
	B
	\hat \Lambda
	\delta u
	\label{num:prop_allocation}
\end{align}
is satisfied, where $\delta u_{\mathrm{base}}$ denotes the baseline controller (\ref{num:lin_base_ctr_structure}) and $\delta u_{\mathrm{ad}}$ a yet undefined adaptive augmentation. Based on the structure of $B$ according to  (\ref{num:decomposition_input_matrices}) we can rewrite (\ref{num:prop_allocation}) as
\begin{align}
	\begin{bmatrix}
	\bar B
	(\delta u_{\mathrm{base}}
	+
	\delta u_{\mathrm{ad}})\\
	0
	\end{bmatrix}
	=
	\begin{bmatrix}
	\bar B
	\hat \Lambda
	\delta u\\
	0
	\end{bmatrix}
	\label{num:prop_allocation2}.
\end{align}
With the full row rank of $\bar B$ according to Assumption \ref{assumpt_full_rank_input_matrix} and the non-singular estimation $\hat \Lambda$ according to Assumption \ref{assumpt_input_paramet_lambda},  the allocation (\ref{num:prop_allocation2}) respectively (\ref{num:prop_allocation})  can always be achieved with regards to $\delta u$. Multiplying (\ref{num:input_matrix_indentity}) by $\delta u_{\mathrm{lat}}$ and substituting (\ref{num:prop_allocation}) leads to
\begin{align}
	(B
	+
	\Delta B
	)
	\delta u
	& 
	=
	B
	\hat \Lambda
	\delta u
	- 
	B
	\tilde \Lambda
	\delta u
	\notag\\
	& 
	=
	B
	(\delta u_{\mathrm{base}}
	+
	\delta u_{\mathrm{ad}})
	- 
	B
	\tilde \Lambda
	\delta u.
	\label{num:input_matrix_indentity2}
\end{align}
Substituting $(B+\Delta B)
\delta u$ into the open-loop dynamics (\ref{num:red_lat_lon_dynamics_pertubed}) and considering the RHS of (\ref{num:input_matrix_indentity2}) yields
\begin{align}
	\dot x
	&=
	(A
	+
	\Delta A)
	\delta x
	+
	B
	(\delta u_{\mathrm{base}}
	+
	\delta u_{\mathrm{ad}})	\notag\\
	&  \quad
	- 
	B\tilde \Lambda
	\delta u
	+
	(E
	+
	\Delta E)
	\delta w
	+
	\Delta d.
\label{num:true_open_loop_rewritten}
\end{align}
Substituting the baseline controller $\delta u_{\mathrm{base}}$ of (\ref{num:lin_base_ctr_structure}) into (\ref{num:true_open_loop_rewritten}) results in
\begin{align}
	\dot x
	&=
	A_{m}
	\delta x
	+
	B_{m} \delta  y_{r}
	+
	B
	\delta u_{\mathrm{ad}}
		\notag\\
	& \quad 
	+
	\Delta A
	\delta x
		+
	\Delta E
	\delta w
		+
	\Delta d
	- 
	B
	\tilde \Lambda
	\delta u.
	\label{num:true_closed_loop_allocation}
\end{align}
On the basis of the structure of the uncertainties in (\ref{num:structure_of_uncertainties}), the full row rank of $\bar B$ according to Assumption \ref{assumpt_full_rank_input_matrix} and the decomposition (\ref{num:decomposition_input_matrices}), it follows that the equations
\begin{align}
	\Delta A
	&=
	B\Theta_{x},
	\quad
	\Delta E
	=
	B\Theta_{w},
	\quad 
	\Delta d
	=
	B\Theta_{d},
	\label{num:matching_cond_allocation}
\end{align}
are guaranteed to have solutions for any $\Theta_{\star}$ with $\star\in\{x,w,d\}$.

Due to the changing flight conditions we treat the perturbations $\Delta (\cdot)$ as explicitly time-varying and therefore also the matching parameters $\Theta_{\star}$ and $\Lambda$. Consequently, we make the following assumption.
\begin{assumption}
\label{assumpt_time_varying_opt_gains}
	We assume the uncertainty terms to be uniformly bounded and continuously differentiable with bounded derivatives, i.e.  
	\begin{align}
		\|\Delta(\cdot,t)\|
		\le 
		\Delta_{\mathrm{max}},
		\qquad
		\|\dot \Delta(\cdot,t)\|
		\le
		\dot \Delta_{\mathrm{max}}.\notag
	\end{align}
	This implies that the matching parameters are well-defined and bounded according to
	\begin{align}
	 &\|\Lambda(t)\|
	 \le 
	 \Lambda_{\mathrm{max}},
	 \qquad
	 \|\Theta_{\star}(t)\|
	 \le
	 \Theta_{\mathrm{max}},\notag\\
	&
	\|\dot\Lambda(t)\|
	\le 
	\dot\Lambda_{\mathrm{max}},
	\qquad
	\|\dot\Theta_{\star}(t)\|
	\le
	\dot\Theta_{\mathrm{max}}
	\notag
	\end{align}
	with $\star\in\{x,w,d\}$.
\end{assumption}

Substituting the matching conditions (\ref{num:matching_cond_allocation}) into (\ref{num:true_closed_loop_allocation}) gives
\begin{align}
	\dot x
	&=
	A_{m}
	\delta x
	+
	B_{m} \delta  y_{r}
	\notag\\
	&   \quad
	+
	B
	(\delta u_{\mathrm{ad}}
	+
	\Theta_{x}
	\delta x
	+
	\Theta_{w}
	\delta w	
	+
	\Theta_{d}
	-
	\tilde \Lambda
	\delta u
	).
\label{num:true_closed_loop_allocation2}
\end{align}
For the adaptive augmentation we define
\begin{align}
	\delta u_{\mathrm{ad}}
	&=
	-
	\hat\Theta_{x}
	\delta x
	-
	\hat\Theta_{w}
	\delta w	
	-
	\hat
	\Theta_{d},
	\label{num:adpative_augemntation_input_alloc}
\end{align}
which, upon substitution into (\ref{num:true_closed_loop_allocation2}), yields
\begin{align}
	\dot x
	&=
	A_{m}
	\delta x
	+
	B_{m} \delta  y_{r}
	\notag\\
	&  \quad
	-
	B
	(
	\tilde\Theta_{x}
	\delta x
	+
	\tilde\Theta_{w}
	\delta w	
	+
	\tilde
	\Theta_{d}
	+
	\tilde \Lambda
	\delta u
	)
	\label{num:true_closed_loop_allocation3}
\end{align}
with 
$\tilde \Lambda=
	\hat \Lambda-\Lambda$ 
and
$\tilde \Theta_{\star}=
	\hat \Theta_{\star}-\Theta_{\star}$ for $\star\in\{x,w,d\}$.

To improve transient behavior during adaption, we modify the desired dynamics (\ref{num:desired_closed_loop_dyanmics}) to a calibrated closed-loop reference model (CCRM) \cite{gibson2013adaptive,gibson2015adaptive} according to
\begin{align}
	\dot x_{c}
	&=
	A_{m}
	\delta  x_{c}
	+
	B_{m}
	\delta  y_{r}
	+
	L
	e
	\label{num:CCRM_model_org}
\end{align}
where 
\begin{align}
	&
	e=\delta  x_{c}-\delta  x=  x_{c}-x
	\label{num:model_matching_error}
\end{align}
is the model matching error and $L$ is a design matrix. Note that the CCRM model (\ref{num:CCRM_model_org}) coincides with the desired closed-loop (\ref{num:desired_closed_loop_dyanmics}) when the matching error $e$ goes to zero.

From (\ref{num:true_closed_loop_allocation3}) and the CCRM model (\ref{num:CCRM_model_org}), we obtain the matching error dynamics
\begin{align}
	\dot e
	&=
	A_{c}
	e
	+
	B
	(
	\tilde\Theta_{x}
	\delta x
	+
	\tilde\Theta_{w}
	\delta w	
	+
	\tilde
	\Theta_{d}
	+
	\tilde \Lambda
	\delta u
	),
\label{num:error_dyanamics}
\end{align}
where $A_{c}=A_{m}
+
L$ is assumed Hurwitz by design of $L$. 

We can now formulate the stability guarantees for the control allocation under uncertainty.

\begin{theorem}
	\label{theo:mrac_convergence_allocation}
    Assume the applied control input $\delta u$ satisfies the constraint 
	\begin{align}
	\bar B
	(\delta u_{\mathrm{base}}
	+
	\delta u_{\mathrm{ad}})
	=
	\bar B
	\hat \Lambda
	\delta u,
	\label{num:def_alloc_problem_theo}
	\end{align}
	where $\delta u_{\mathrm{base}}$ is the baseline controller according to (\ref{num:lin_base_ctr_structure}) and $\delta u_{\mathrm{ad}}$ is the adaptive augmentation (\ref{num:adpative_augemntation_input_alloc}). Let the estimated parameters be updated as
	\begin{subequations}
	\label{num:adaptation_laws}
	\begin{align}
	\dot {\hat \Lambda}^\top 
	&=
	-
	\mathrm{Proj}
	(
	{\hat \Lambda}^{\top}, 
	\Gamma_{\Lambda} 
	\delta  u
	e^{\top}PB
	),
	\\
	\dot {\hat \Theta}_{x}^{\top} 
	&=
	-
	\mathrm{Proj}(
	{\hat \Theta}_{x}^{\top}, 
	\Gamma_{x} 
	\delta x e^{\top}PB
	),
	\\
	\dot {\hat \Theta}_{w}^{\top} 
	&=
	-
	\mathrm{Proj}(
	{\hat \Theta}_{w}^{\top},
	\Gamma_{w} \delta w e^{\top}PB
	),
	\\
	\dot {\hat \Theta}_{d}^{\top} 
	&=
	-
	\mathrm{Proj}(
	{\hat \Theta}_{d}^{\top},
	\Gamma_{d} e^{\top}PB
	),
	\end{align}
	\end{subequations}
	where $\Gamma_{\Lambda}\succ0$ and $\Gamma_{\star}\succ0$ with $\star\in\{x,w,d\}$ are tuning parameters and $P$ is the solution of the Lyapunov equation
	\begin{align}
		A_{c}^{\top}P+PA_{c}
		&=
		-Q,
		\qquad Q \succ0.
		\label{num:Lyapunov_equation}
	\end{align}
	Consider the projection operator $\mathrm{Proj}(\cdot,\cdot)$ as defined in \cite[Ch.~11]{eugene2013robust}. For $\mathrm{Proj}(\cdot,\cdot)$, we further assume that the true matching parameters
	$
	\|\Lambda(t)\|
	\le 
	\Lambda_{\mathrm{max}}$
	and
	$\|\Theta_{\star}(t)\|
	\le
	\Theta_{\mathrm{max}}$ 
	are in the interior of the convex set on which the projection operator is defined on.
	
	Then it follows that the matching error $e$ and the parameter estimation errors $\tilde \Lambda$ and
	$\tilde \Theta_{\star}$ are all uniformly ultimately bounded.
	
\end{theorem}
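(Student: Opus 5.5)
The argument is a standard Lyapunov analysis for model reference adaptive control with projection and time-varying parameters. I would take the candidate
\begin{align}
V
=
e^{\top}Pe
+
\mathrm{tr}\big(\tilde\Lambda\Gamma_{\Lambda}^{-1}\tilde\Lambda^{\top}\big)
+
\sum_{\star\in\{x,w,d\}}
\mathrm{tr}\big(\tilde\Theta_{\star}\Gamma_{\star}^{-1}\tilde\Theta_{\star}^{\top}\big),
\notag
\end{align}
where $P\succ0$ solves (\ref{num:Lyapunov_equation}). First, I would note that the constraint (\ref{num:def_alloc_problem_theo}) together with the zero last row of $B$ in (\ref{num:decomposition_input_matrices}) is exactly the allocation identity (\ref{num:prop_allocation}). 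Hence the derivation leading to (\ref{num:true_closed_loop_allocation3}) holds. The matching conditions (\ref{num:matching_cond_allocation}) and the CCRM (\ref{num:CCRM_model_org}) then give the error dynamics (\ref{num:error_dyanamics}) along every closed-loop trajectory.

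Next, I would differentiate $V$ along (\ref{num:error_dyanamics}). The quadratic part gives $-e^{\top}Qe+2e^{\top}PB(\tilde\Theta_x\delta x+\tilde\Theta_w\delta w+\tilde\Theta_d+\tilde\Lambda\delta u)$. I would rewrite each cross term as a trace, for example $2e^{\top}PB\tilde\Lambda\delta u=2\,\mathrm{tr}(\tilde\Lambda\,\delta u\,e^{\top}PB)$. The parameter part gives $2\,\mathrm{tr}(\tilde\Lambda\Gamma_{\Lambda}^{-1}\dot{\tilde\Lambda}^{\top})$ with $\dot{\tilde\Lambda}=\dot{\hat\Lambda}-\dot\Lambda$, and analogously for each $\Theta_\star$.

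To handle the estimates, I would use the projection property from \cite[Ch.~11]{eugene2013robust}: $(\hat\theta-\theta)^{\top}(\mathrm{Proj}(\hat\theta,y)-y)\le0$ whenever $\theta$ lies in the convex set. Applied column-wise with $y=\Gamma\,\delta u\,e^{\top}PB$ and so on, and using the signs in (\ref{num:adaptation_laws}), the $\dot{\hat{(\cdot)}}$ contributions cancel the cross terms up to nonpositive remainders. I would verify this sign bookkeeping carefully, keeping $\Gamma^{-1}$ on the correct side for the transposed matrices. The same projection also guarantees that $\hat\Lambda$ and $\hat\Theta_\star$ stay in a compact set for all time, since the initial estimates lie in it. Because the true parameters are bounded by Assumption \ref{assumpt_time_varying_opt_gains}, this yields uniform bounds $\|\tilde\Lambda\|,\|\tilde\Theta_\star\|\le\bar\theta$ independent of $e$.

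The remaining terms come from the drift of the true parameters. They are $-2\,\mathrm{tr}(\tilde\Lambda\Gamma_{\Lambda}^{-1}\dot\Lambda^{\top})$ and its analogues, each bounded by a constant $c_1$ built from $\bar\theta$, $\|\Gamma^{-1}\|$, $\dot\Lambda_{\max}$ and $\dot\Theta_{\max}$. This gives $\dot V\le-\lambda_{\min}(Q)\|e\|^2+c_1$. Hence $\dot V<0$ whenever $\|e\|>\sqrt{c_1/\lambda_{\min}(Q)}$. Since the parameter errors are already confined to a compact set by projection, $V$ decreases outside a compact level set. A standard UUB argument (e.g.\ \cite[Ch.~11]{eugene2013robust}) then gives uniform ultimate boundedness of $e$, with the bound on $\tilde\Lambda$ and $\tilde\Theta_\star$ following from projection.

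I expect the main obstacle to be the time variation: with $\dot\Lambda\neq0$ there is no $\dot V\le0$ conclusion, so boundedness of the parameter errors must come from projection rather than from $V$. I also need the bound to hold without first assuming $\delta x$, $\delta u$, $\delta w$ bounded; the regressor terms cancel exactly, so this is fine. A secondary point is that $\delta u$ is determined implicitly through $\hat\Lambda^{-1}$, so it is well defined by Assumption \ref{assumpt_input_paramet_lambda}. Finally, boundedness of $e$ only translates into boundedness of $x$ if $x_c$ is bounded. Since $A_m$ is Hurwitz and $\delta y_r$ is bounded, the $(x_c,e)$ interconnection, with $e$ ultimately bounded, gives this, but the theorem only claims UUB of $e$ and the parameter errors.
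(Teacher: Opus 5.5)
Your argument is essentially the paper's proof. It uses the same Lyapunov function up to the factor $\tfrac12$, the projection property to cancel the regressor terms, and projection-based bounds on $\tilde\Lambda,\tilde\Theta_\star$. The only difference is that you bound the drift terms $\operatorname{tr}(\tilde\Lambda\Gamma_\Lambda^{-1}\dot\Lambda^{\top})$, etc., by a constant right away. The paper instead first bounds them by $c_1\sqrt{V}$, then uses the same projection bounds, Young's inequality and the comparison lemma to get $\limsup_{t\to\infty}V(t)\le 2\bar c/c_0$; this is exactly what your ``standard UUB argument'' amounts to.

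One caveat concerns the sign check you deferred, and it affects the paper's proof equally. Set $Y=\Gamma_\Lambda\,\delta u\,e^{\top}PB$. The law as written, $\dot{\hat\Lambda}^{\top}=-\mathrm{Proj}(\hat\Lambda^{\top},Y)$, leaves the remainder $\operatorname{tr}\bigl(\tilde\Lambda\Gamma_\Lambda^{-1}(Y-\mathrm{Proj}(\hat\Lambda^{\top},Y))\bigr)$. For scalar $\Gamma_\Lambda$, Lemma~11.3 makes this nonnegative, not nonpositive. The law also does not keep $\hat\Lambda$ in the set: on the boundary with $Y$ pointing inward the operator is inactive, so $\hat\Lambda^{\top}$ moves along $-Y$, i.e.\ outward. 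The argument goes through once the laws are read as $\dot{\hat\Lambda}^{\top}=\mathrm{Proj}(\hat\Lambda^{\top},-\Gamma_\Lambda\,\delta u\,e^{\top}PB)$, and likewise for $\hat\Theta_\star$. If the gains are not scalar, you also need the $\Gamma$-weighted version of the operator, otherwise the $\Gamma^{-1}$ in the trace spoils the inequality.
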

\begin{proof}
	By (\ref{num:def_alloc_problem_theo}) it follows that (\ref{num:prop_allocation2}) respectively (\ref{num:prop_allocation}) is satisfied. Therefore (\ref{num:input_matrix_indentity2}) holds and the lateral dynamics are (\ref{num:true_open_loop_rewritten}) respectively (\ref{num:true_closed_loop_allocation2}). As the adaptive augmentation (\ref{num:adpative_augemntation_input_alloc}) is applied it follows (\ref{num:true_closed_loop_allocation3}) and finally the error dynamics (\ref{num:error_dyanamics}) hold. 
	
Consider the Lyapunov function candidate
\begin{align}
	V
	&=
	\frac{1}{2}
	e^{\top}Pe
	+
	\frac{1}{2}\operatorname{tr}
	(\tilde \Lambda\Gamma^{-1}_{\Lambda}\tilde \Lambda^{\top})\notag\\
	&
	\qquad
	+
	\frac{1}{2}
	\sum_{\star\in\{x,w,d\}}
	\operatorname{tr}
	(
	\tilde\Theta_{\star}
	\Gamma^{-1}_{\star}
	\tilde \Theta_{\star}^{\top}
	).
\label{num:Lyapunov_fct_candidate}
\end{align}

Then, based on the error dynamics (\ref{num:error_dyanamics}) and the Lyapunov equation (\ref{num:Lyapunov_equation}), after some algebra, it follows that
\begin{align}
\dot	V
	&=
	-
	\frac{1}{2}
	e^{\top}
	Q
	e\notag\\
	&
	+
	\operatorname{tr}(
	\tilde\Theta_{x}
	(
	\delta x e^{\top}PB
	+
	\Gamma^{-1}_{x}
	\dot {\hat\Theta}_{x}^{\top}
	)
	)
	-
	\operatorname{tr}(
	\tilde\Theta_{x}
	\Gamma^{-1}_{x}
	\dot {\Theta}_{x}^{\top})
	\notag\\
	&
	+
	\operatorname{tr}(
	\tilde\Theta_{w}
	(
	\delta w e^{\top}P
	B	
	+	
	\Gamma^{-1}_{w}
	\dot {\hat\Theta}_{w}^{\top}
	)
	)
	-
	\operatorname{tr}(
	\tilde\Theta_{w}
	\Gamma^{-1}_{w}
	\dot {\Theta}_{w}^{\top}
	)
	\notag\\
	&
	+
	\operatorname{tr}(
	\tilde\Theta_{d} 
	(
	e^{\top}PB
	+
	\Gamma^{-1}_{d}
	\dot {\hat\Theta}_{d}^{\top}
	)
	)
	-
	\operatorname{tr}(
	\tilde\Theta_{d} 
	\Gamma^{-1}_{d}
	\dot {\Theta}_{d}^{\top}
	)
	\notag\\
	&
	+
	\operatorname{tr}(
	\tilde \Lambda
	(
	\delta u e^{\top}P
	B
	+
	\Gamma^{-1}_{\Lambda}
	\dot {\hat \Lambda}^{\top}
	)
	)
	-
	\operatorname{tr}(
	\tilde \Lambda
	\Gamma^{-1}_{\Lambda}
	\dot {\Lambda}^{\top}
	).
\end{align}
Using the adaptation laws (\ref{num:adaptation_laws}) and the properties of the projection operator, cf. \cite[Lemma~11.3]{eugene2013robust}, it follows
\begin{align}
	\dot V
	&\le
	-
	\frac{1}{2}
	e^{\top}
	Q
	e		
	-
	\operatorname{tr}
	(\tilde\Lambda
	\Gamma^{-1}_{\Lambda}
	\dot {\Lambda}^{\top}
	)
	-
	\sum_{\star}
	\operatorname{tr}
	(\tilde\Theta_{\star}
	\Gamma^{-1}_{\star}\dot {\Theta}^{\top}_{\star}
	)
	\label{num:theo_proof_Lyapunov_00}
	.
\end{align}
Upper bounding (\ref{num:theo_proof_Lyapunov_00}) yields
\begin{align}
	\dot V
	&\le
	-
	\frac{1}{2}
	e^{\top}
	Q
	e		
	+
	\|\bar\Lambda
	\|_F
	\dot { \bar\Lambda}_{\mathrm{max}}		
	+
	\sum_{\star}
	\|\bar\Theta_{\star}\|_F
	\dot { \bar\Theta}_{\mathrm{max}}
	\label{num:theo_proof_Lyapunov_01}
\end{align}
with
\(
\bar\Lambda
\triangleq
\tilde\Lambda
\Gamma^{-1/2}_{\Lambda}
\)
and
\(
\bar\Theta_{\star}
\triangleq
\tilde\Theta_{\star}
\Gamma^{-1/2}_{\star}
\)
as well as 
\(
\dot { \bar\Lambda}_{\mathrm{max}}
\triangleq
\dot {\Lambda}_{\mathrm{max}}
\|\Gamma^{-1/2}_{\Lambda}\|_F
\ge
\|
\dot {\Lambda}\Gamma^{-1/2}_{\Lambda}
\|_F
\)
and
\(
\dot { \bar\Theta}_{\mathrm{max}}
\triangleq
\max_{\star}
\{
\dot {\Theta}_{\mathrm{max}}
\|
\Gamma^{-1/2}_{\star}
\|_F
\}
\ge
\|
\dot {\Theta}_{\star}
\Gamma^{-1/2}_{\star}
\|_F
\)
based on Assumption~\ref{assumpt_time_varying_opt_gains}.
We can write (\ref{num:Lyapunov_fct_candidate}) as
\begin{align}
	V
	&=
	\frac{1}{2}
	e^{\top}Pe
	+
	\frac{1}{2}
	\|\bar\Lambda\|^2_F
	+
	\frac{1}{2}
	\sum_{\star}
	\|
	\bar\Theta_{\star}\|^2_F
\label{num:Lyapunov_fct_candidate_rewrite}
\end{align}
and by bounding $V$ from below we obtain
\begin{align}
	\|\bar\Lambda\|_F 
	\le
\sqrt{2V},
\qquad
\|
\bar\Theta_{\star}\|_F
\le
\sqrt{2V}.
\label{num:bounds_Frob_errors}
\end{align}
Substituting (\ref{num:bounds_Frob_errors}) into (\ref{num:theo_proof_Lyapunov_01}) results in 
\begin{align}
	\dot V
	&\le
	-
	\frac{1}{2}
\lambda_{\mathrm{min}}(Q)
\|e\|^2				
	+
	c_1
	\sqrt{V}
\label{num:Lyapunov_deriv_first_bound}
\end{align}
with $c_1=	\sqrt{2}
(
\dot { \bar\Lambda}_{\mathrm{max}}		
+
3
\dot { \bar\Theta}_{\mathrm{max}}
)$.

From Assumption~\ref{assumpt_time_varying_opt_gains}, we know that the true matching parameters $\Lambda$
and
$\Theta_{\star}$ are bounded. As we apply the projection operator, we know that the estimated parameters $\hat \Lambda$ and $\hat \Theta_{\star}$ are bounded as well. Consequently, finite bounds 
$
\tilde \Lambda_\mathrm{max}
\ge
\|\tilde \Lambda\|_F
$
and
$
\tilde \Theta_\mathrm{max}
\ge
\max_{\star}
\|\tilde \Theta_{\star}\|_F
$ 
of the parameter estimation errors exist. Therefore, 
\(
\bar\Lambda_\mathrm{max}
\triangleq
\tilde \Lambda_\mathrm{max}
\|
\Gamma^{-1/2}_{\Lambda}
\|_F
\ge
\|\bar\Lambda\|_F
\) and 
\(
\bar\Theta_{\mathrm{max}}
\triangleq
\max_{\star}
\{
\tilde \Theta_\mathrm{max}
\|\Gamma^{-1/2}_{\star}\|_F
\}
\ge
\|
\bar\Theta_{\star}\|_F
\)
are finite. Then by bounding $V$ of (\ref{num:Lyapunov_fct_candidate_rewrite}) from above, we obtain
\begin{align}
	V
	&\le
	\frac{1}{2}
	\lambda_{\mathrm{max}}(P)
	\|e\|^2
	+
	\frac{1}{2}
	\bar\Lambda^2_\mathrm{max}
	+
	\frac{3}{2}
	\bar\Theta^2_{\mathrm{max}}
\end{align}
which leads to
\begin{align}
		&-
	\frac{1}{2}
	\lambda_{\mathrm{min}}(Q)
	\|e\|^2
	\le
	-
\frac{\lambda_{\mathrm{min}}(Q)}
{\lambda_{\mathrm{max}}(P)}
V
+ c_2
\label{num:Lyapunov_fct_candidate_ub}
\end{align}
with $c_2=\frac{\lambda_{\mathrm{min}}(Q)}
{2\lambda_{\mathrm{max}}(P)}
(
\bar\Lambda^2_\mathrm{max}
+
3
\bar\Theta^2_{\mathrm{max}}
)$. Substituting (\ref{num:Lyapunov_fct_candidate_ub}) in (\ref{num:Lyapunov_deriv_first_bound}) gives
\begin{align}
	\dot V
	&\le
	-
	c_0
	V			
	+
	c_1
	\sqrt{V}
	+ 
	c_2
\label{num:Lyapunov_deriv_bound2}
\end{align}
with $c_0=\frac{\lambda_{\mathrm{min}}(Q)}
{\lambda_{\mathrm{max}}(P)}>0$ and applying Young's inequality yields
\begin{align}
	\dot V
	&\le
	-
	c_0
	V			
	+
	\frac{c_0}{2}
	V
	+
	\frac{c^2_1}{2c_0}
	+ 
	c_2
	=
	-
	\frac{c_0}{2}
	V			
	+
	\bar c
\label{num:Lyapunov_deriv_bound3}
\end{align}
with
$\bar c= 	\frac{c^2_1}{2c_0}
	+ 
	c_2$. Then
\begin{align}
	V(t)
	\le
	e^{-\frac{c_0}{2}t}V(0)
	+
	\frac{2\bar c}{c_0}
	(1
	-
	e^{-\frac{c_0}{2}t}
	)
\label{num:Lyapunov_deriv_bound4}
\end{align}
and therefore 
\begin{align}
\limsup_{t\to\infty}
	V(t)
	\le
	\frac{2\bar c}{c_0}.
\label{num:Lyapunov_deriv_UUB}
\end{align}
\end{proof}

The bound (\ref{num:Lyapunov_deriv_UUB}) can be made sufficient small if the overall uncertainty effects are small enough and if the true parameter evolve slow enough over time, then 		
$
\tilde \Lambda_\mathrm{max}
$,
$
\tilde \Theta_\mathrm{max}
$,
$
\dot {\Lambda}_{\mathrm{max}}
$,
and
$
\dot {\Theta}_{\mathrm{max}}
$
remain all small. These restriction of adaptive control in case of time-varying parameters are well known \cite{chen2018adaptive}. When explicit incorporation of time-varying effects into the adaptation process is required, the reader is referred to recent advances in  \cite{goel2024composite,gaudio2021parameter,chen2018adaptive}.

Finally, we formulate the control allocation problem 
\begin{subequations}
\label{num:def_control_allocation_problem}
\begin{align}
	&\delta u
	=
	\argmin_{\delta u\in\mathbb{R}^{N+1}} 
	\|
	\delta u
	-
	\delta u_{\mathrm{aero}}
	\|^2_{W},
	\\
	& 
	\text{s.t.}
	\qquad
	\bar B
	(\delta u_{\mathrm{base}}
	+
	\delta u_{\mathrm{ad}})
	=
	\bar B
	\hat \Lambda
	\delta u 
	\label{num:def_control_allocation_hard_constraint}
\end{align}
\end{subequations}
with weighing matrix $W\succ0$, where $\delta u_{\mathrm{aero}}$ denotes the aerodynamically optimized inputs from (\ref{num:def_aero_opt_inputs}), $\delta u_{\mathrm{base}}$ is the baseline controller (\ref{num:lin_base_ctr_structure}), $\delta u_{\mathrm{ad}}$ is the adaptive augmentation (\ref{num:adpative_augemntation_input_alloc}) and all parameters are assumed being adapted as stated by (\ref{num:adaptation_laws}). As the allocation problem enforces the hard constraint (\ref{num:def_control_allocation_hard_constraint}), it follows by Theorem \ref{theo:mrac_convergence_allocation} that stability of the matching error is preserved.

Define 
\(
	\Pi 
	=
	\bar B
	\hat \Lambda
\)
then from KKT conditions it results that
\begin{align}
	\delta u	
	&=
	\delta u_{\mathrm{aero}}
	-
	W^{-1}
	\Pi^{\top} 
	(\Pi
	W^{-1}
	\Pi^{\top})^{-1}\notag\\
	&\qquad {}\cdot 
	(
	\Pi
	\delta u_{\mathrm{aero}}
	-
	\bar B
	(\delta u_{\mathrm{base}}
	+
	\delta u_{\mathrm{ad}})
	)
\end{align}
is the analytic solution of (\ref{num:def_control_allocation_problem}), where $\Pi W^{-1}\Pi^{\top}$ is guaranteed to be non-singular under the Assumptions \ref{assumpt_full_rank_input_matrix} and \ref{assumpt_input_paramet_lambda}.

\begin{figure}
	\centering
	\begin{subfigure}{0.99\linewidth}
		\includegraphics[width=\linewidth]{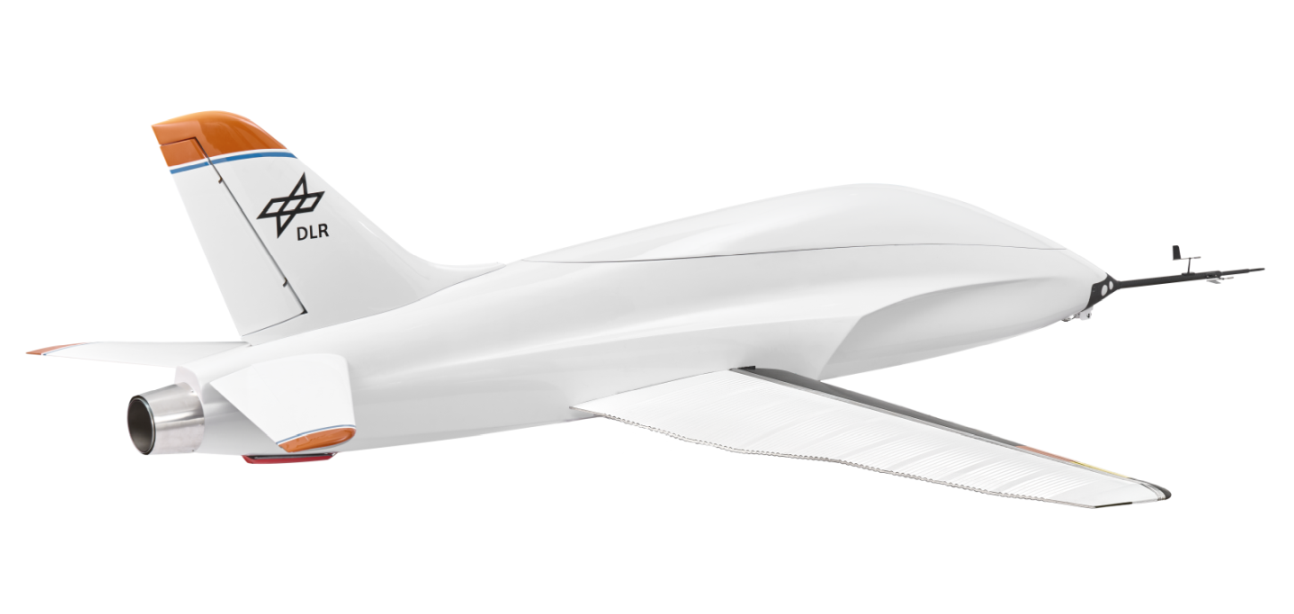}
	\end{subfigure}
	
	\caption{DLR UAV demonstrator Proteus with morphing trailing edge (Credit: DLR, CC BY-NC-ND 3.0).}
	\label{fig:num_exampl_proteus}
\end{figure}

\begin{figure*}
	\centering
	\begin{subfigure}{0.49\linewidth}
		\includegraphics[width=\linewidth]{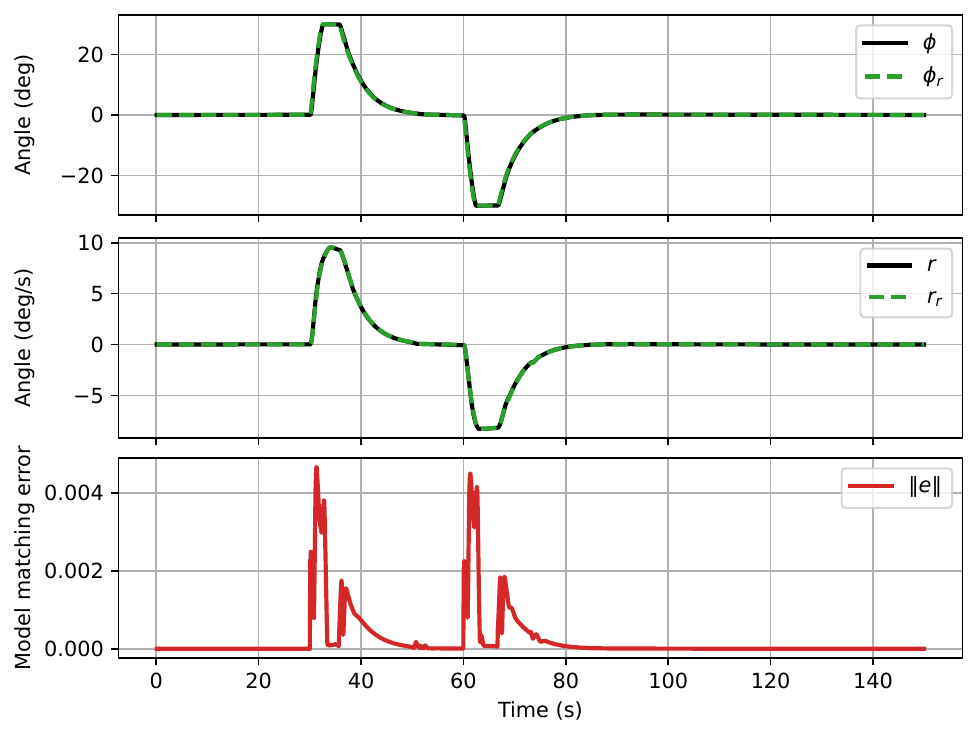}
		\caption{Tracking performance w/ augmentation}
	\end{subfigure}
	\hfill
	\begin{subfigure}{0.49\linewidth}
		\includegraphics[width=\linewidth]{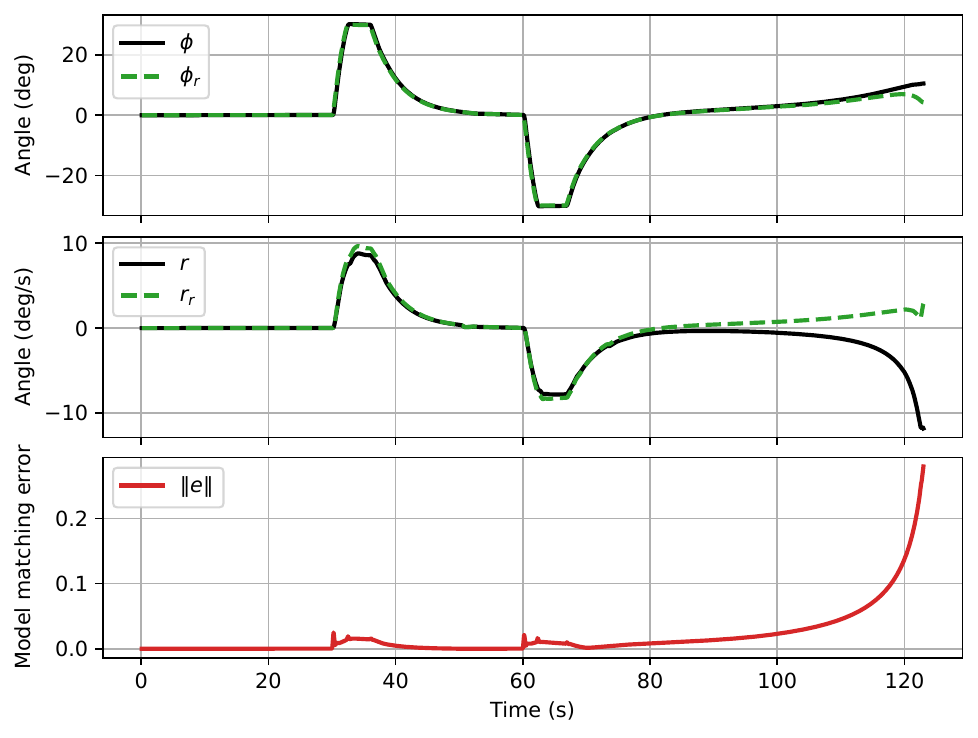}
		\caption{Tracking performance w/o augmentation}
	\end{subfigure}
	\begin{subfigure}{0.49\linewidth}
		\includegraphics[width=\linewidth]{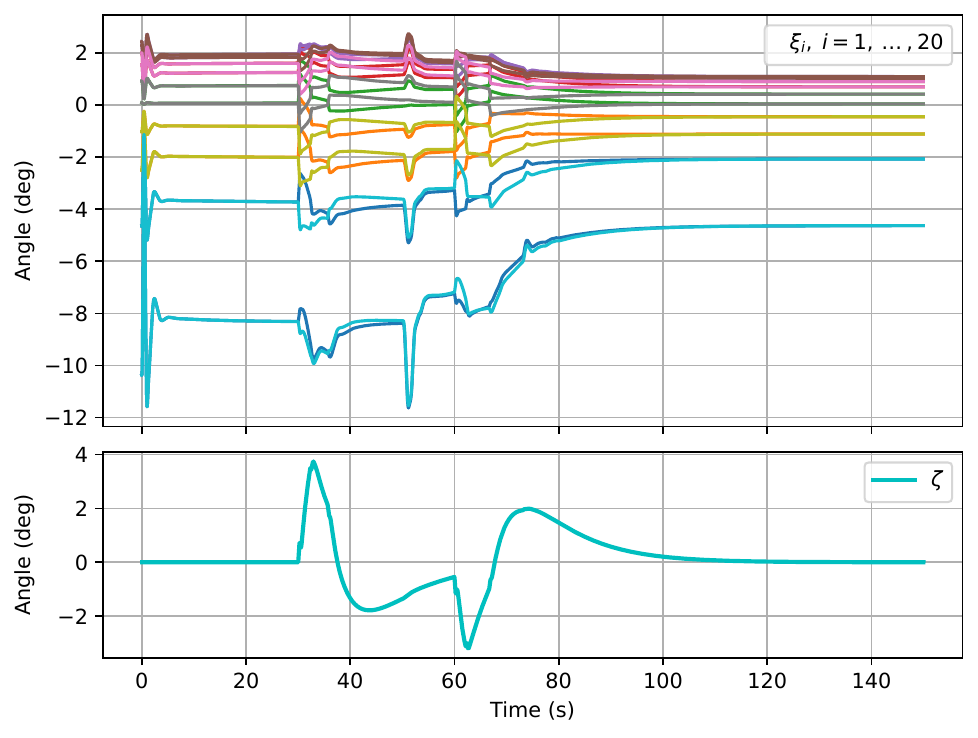}
		\caption{Control inputs w/ augmentation}
	\end{subfigure}
		\hfill	
	\begin{subfigure}{0.49\linewidth}
			\includegraphics[width=\linewidth]{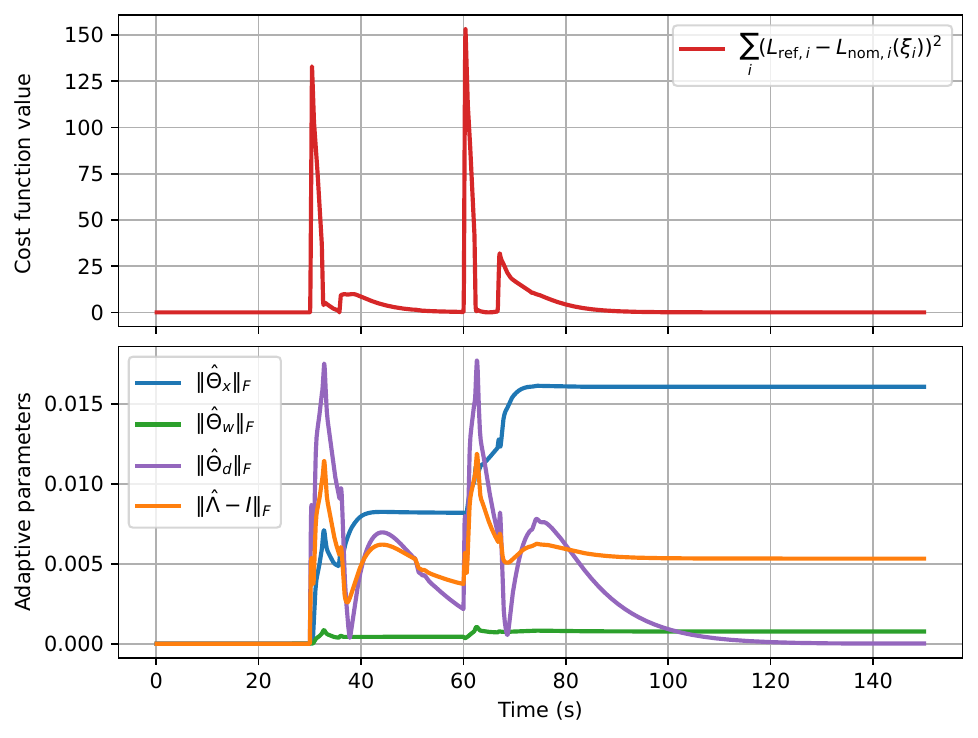}
			\caption{Aerodynamic costs and parameter adaptation w/ augmentation}
\end{subfigure}

	\caption{Performance evaluation of the proposed allocation approach with adaptive augmentation}
	\label{fig:num_exampl_sim_results}
\end{figure*}

%% file: sections/num_example.tex

The proposed allocation approach with adaptive augmentation is evaluated in a simulation environment that implements the nonlinear aircraft dynamics (\ref{num:rigid_body_dynamics})–(\ref{num:thrust_model}). The simulation model is parameterized using the morphing-wing UAV demonstrator Proteus \cite{radestock2025comparison} shown in Fig. \ref{fig:num_exampl_proteus}. The aircraft is equipped with ten servos per wing for wing trailing edge morphing.  

The initial flight condition is defined by the airspeed, altitude, flight path angle and course as $v_{\mathrm{TAS}}(t_0)=\SI{30}{m/s}$, $h(t_0)=\SI{500}{m}$, $\gamma(t_0)=\SI{0}{\deg}$, $\chi(t_0)=\SI{0}{\deg}$. Trimming the nonlinear dynamics at this operating point yields the corresponding equilibrium states and control inputs. Around the trimmed condition, the nonlinear dynamics are numerically linearized to obtain the nominal linear lateral model of (\ref{num:linear_lateral_motion}). 

The nominal aerodynamic coefficients are obtained from a precomputed look-up table generated offline using Athena Vortex Lattice (AVL) \cite{DrelaAVL}. To account for model uncertainty, the aerodynamic coefficients are perturbed. Specifically, each coefficient is modeled as
\(
C_{\mathrm{aero}} = C_{\mathrm{aero},\mathrm{nom}} ( 1 + \Delta C_\mathrm{aero} ),
\)
where $\Delta C_\mathrm{aero}$ denotes a bounded relative perturbation and $C_{\mathrm{aero},\mathrm{nom}}$ the nominal known value from the look-up table. 

The proposed lateral controller is embedded into an autopilot where it serves as an inner-loop control layer. The outer-loop controller generates the references values of the inner-loop control variables (\ref{num:attitude_control_varibles}) in order to follow desired commands for speed, course and altitude denoted as $v_{\mathrm{TAS}_c}$, $\chi_c$ and $h_c$. Specifically, we consider the following commands: 
$\chi_c(t) \in \{ 0, \tfrac{\pi}{2}, 0\}$ switching at $\SI{30}{s}$ and $\SI{60}{s}$, $h_c(t) \in \{\SI{500}{m}, \SI{700}{m}\}$ switching at $\SI{50}{s}$ and $v_{\mathrm{TAS}_c}(t) \in \{\SI{30}{m/s}, \SI{40}{m/s}\}$ switching at $\SI{50}{s}$.

The performance evaluation of the proposed approach is visualized in Fig. \ref{fig:num_exampl_sim_results}. The tracking error of the control variables $\phi$ and $r$, as well as the matching error (\ref{num:model_matching_error}) with respect to the reference model (\ref{num:CCRM_model_org}), are evaluated for both cases with and without adaptive augmentation. For the case without augmentation, we set $\delta u_{\mathrm{ad}}=0$ and $\hat \Lambda=I$, such that none of the uncertainties in (\ref{num:true_closed_loop_allocation}) are compensated. The corresponding results are shown in Fig. \ref{fig:num_exampl_sim_results} (a) and (b), respectively. It can be seen that, with adaptive augmentation, precise tracking is achieved despite the uncertainties, with only small matching errors. In contrast, without augmentation, the yaw dynamics become unstable, resulting in a rapidly increasing matching error. For the control allocation, we define the aerodynamic cost $J_{\mathrm{aero}}=\sum_i(L_{\mathrm{ref},i}-L_{\mathrm{nom},i}(\xi_i))^2$ which is shown in Fig. \ref{fig:num_exampl_sim_results}(d). It can be seen that the wing shape is aligned with the desired symmetric elliptic lift distribution in wings-level flight. Deviations occur during the turning maneuvers from $\SI{30}{s}$ to approximately $\SI{80}{s}$, as these maneuvers require an asymmetric lift distribution. This is also reflected in the left and right wing servo inputs $\xi$ shown in Fig \ref{fig:num_exampl_sim_results}(c), which are symmetric except during the interval from approximately $\SI{30}{s}$ to $\SI{80}{s}$.
Furthermore, Fig. \ref{fig:num_exampl_sim_results}(d) shows that the adaptive parameters remain bounded and smooth.